\newtheoremstyle{mydefinition}
  {\medskipamount}
  {\medskipamount}
  {\normalfont}
  {\parindent}
  {\bfseries}
  {.}
  { }
  {}
\long\def\comment#1\endcomment{}
\def\R{{\mathbb R}}
\newcommand{\arxivonly}[1]{}
\renewcommand{\setminus}[0]{-}
\renewcommand{\labelenumi}{(\alph{enumi})}
\newcommand{\setenumi}{
    \topsep=0mm
    \partopsep=0mm
    \parsep=0mm
    \itemsep=0mm
    \leftmargin=12mm
    \listparindent=0mm
    \itemindent=-5mm
    \labelsep=2mm
    \labelwidth=0mm
    \usecounter{enumi}
    }
\newcommand{\setitemi}{
    \topsep=0mm
    \partopsep=0mm
    \parsep=0mm
    \itemsep=0mm
    \leftmargin=\leftmargini
    \listparindent=2mm
    \itemindent=0mm
    \labelsep=2mm
    \labelwidth=0mm
    }
\renewenvironment{itemize}
    {\begin{list}{$\bullet$}{\setitemi}}
    {\end{list}}
\DeclareMathOperator{\sgn}{sgn}
\newtheorem{theorem}{Theorem}[section]
\newtheorem*{theorem*}{Theorem}
\newtheorem{lemma}[theorem]{Lemma}
\newtheorem{op}[theorem]{Open Problem}
\newtheorem{remark}[theorem]{Remark}
\begin{document}

\title{On winding numbers of almost embeddings of $K_4$ in the plane}

\author[E. Alkin]{Emil Alkin}
\address{Moscow Institute of Physics and Technology, Moscow, Russia}
\email{alkin.ev@phystech.edu}

\author[A. Miroshnikov]{Alexander Miroshnikov}
\address{Moscow Institute of Physics and Technology, Moscow, Russia}
\email{mirosh.av@mail.ru}

\thanks{We are grateful to E. Morozov for suggesting an example \cite[Example~5.4(a)]{ABM+} which allowed us to guess Theorem~\ref{t:main}, although we do not use this example in our proof. We are grateful to A. Skopenkov, and especially to T. Garaev for useful discussions. Some ideas of proof of Theorem~\ref{t:main} were independently invented by A. Lazarev.}

\keywords{graph drawing, almost embedding, winding number}
\subjclass{57M15, 55M25, 05C10}

\begin{abstract}
    Let $K_4$ be the complete graph on four vertices.
    Let $f$ be a continuous map of $K_4$ to the plane such that $f$-images of non-adjacent edges are disjoint. For any vertex $v \in K_4$
    take the winding number of the $f$-image of the cycle 
    $K_4 \setminus v$ around $f(v)$. It is known that the sum of these four integers is odd. We 
    construct examples showing
    that this is the only relation between these four numbers.
\end{abstract}

\maketitle

\section{Introduction and the main result}
\label{sec:intro_main}
A classical subject is study of planar graph drawings without self-intersections (i.e. embeddings or plane graphs).   
It is also interesting to study graph drawings having `moderate' self-intersections, e.g.  almost embeddings. 


Let $K_4$ be the complete graph on vertices $\{1, 2, 3, 4\}$. 
A (piecewise-linear\footnote{\label{footnote}We recall this definition in Section~\ref{sec:basic_def}.}, or continuous) map $f:K_4 \to \R^2$ is called an \textbf{almost embedding} if $f(\alpha)\cap f(\beta) = \varnothing$ for any two non-adjacent 
edges $\alpha,\beta\subset K_4$ (see discussion of this definition in Section~\ref{sec:mot}).
We shorten `piecewise-linear almost embedding' to `almost embedding'.


Almost embeddings naturally appear in geometric topology (studies of embeddings), in combinatorial geometry (Helly-type results on convex sets) and in topological combinatorics (topological
Radon and Tverberg theorems). See more motivations in \cite[\S6.10 `Almost embeddings, $\mathbb{Z}_2$- and $\mathbb{Z}$-embeddings']{Algor}, and the references therein. 
For relation to modern research on graphs in the plane see expository papers~\cite{ABM+, ABM+rus}, survey papers~\cite{Sk18, SS, Kyncl},
research papers~\cite{IKN+, Ga23} 
and the references therein.

\medskip





Let $O$ be a point in the plane.
Let $A_1\ldots A_m$ be a closed polygonal line$^{\ref{footnote}}$  not passing through $O$. 
The {\bf winding number} $w(A_1\ldots A_m,O)$ of $A_1\ldots A_m$ around $O$ is the number of revolutions during the rotation of vector whose origin is $O$, and whose endpoint goes along the polygonal line in positive direction.   
Rigorously, 
$$ 2\pi \cdot w(A_1\ldots A_m,O) := 
\angle A_1OA_2+\angle A_2OA_3+\ldots+\angle A_{m-1}OA_m+\angle A_mOA_1$$
is the sum of the oriented angles (this definition is consistent with the well-known definition of the \emph{winding number} of a  \emph{closed continuous curve} in the plane around a given point).

In this paper $K$ is a finite graph. Let $f: K \to \mathbb{R}^2$ be a piecewise-linear map. 
The \emph{restriction} $f|_{ab}$ to an oriented edge $ab$ is the corresponding polygonal line that starts at $f(a)$ and ends at $f(b)$.
Let $C = v_1 \ldots v_n$ be a directed (i.e. oriented) cycle in $K$. 
The \emph{restriction} $f|_C:C\to\R^2$ of $f$ to $C$ is the closed polygonal line $f|_{v_1v_2}\ldots f|_{v_{n-1}v_n}f|_{v_nv_1}$.
For a vertex $v$ in $K$ such that $f(v) \notin f(C)$ denote 
$$w_f(C, v) := w(f|_C ,f(v)).$$

For $j = 1, 2, 3, 4$ denote by $C_j$ the directed cycle in $K_4$ obtained by deleting $j$ from $1234$.
For an almost embedding $f : K_4 \to \mathbb{R}^2$ and a vertex $j$ in $K_4$ denote
$$
w_f(j) := w_f(C_j, j) =  w(f|_{C_j}, f(j)).
$$




\begin{theorem}[folklore] \label{t:radonae}For any continuous almost embedding $f : K_4 \to \R^2$ we have 
$\sum_{j=1}^4 w_f(j) \equiv 1 \pmod 2$. 
\end{theorem}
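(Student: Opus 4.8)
The plan is to realize $\sum_{j=1}^4 w_f(j)$ modulo $2$ as the classical mod $2$ van Kampen number of the nonplanar graph $K_5$, obtained from $K_4$ by adjoining an apex ``at infinity''. First I would make $f$ generic: since disjointness of the images of two fixed edges is an open condition, a sufficiently small perturbation keeps $f$ an almost embedding and changes no $w_f(j)$, so I may assume all crossings are transverse and in general position. Note that for each $j$ we have $f(j)\notin f(C_j)$: if $f(j)$ lay on the edge $f(kl)$ of $C_j$, then $f(j)\in f(jm)\cap f(kl)$ for the fourth vertex $m$, contradicting almost embeddedness of the non-adjacent pair $jm, kl$. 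Hence every $w_f(j)$ is defined, and modulo $2$ it equals the number $\#\bigl(r_j\cap f(C_j)\bigr)$ of transverse intersections of a generic rightward ray $r_j$ emanating from $f(j)$ with the closed polygonal line $f(C_j)$.

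Next I would extend $f$ to a PL map $\hat f\colon K_5\to\R^2$ on vertices $\{0,1,2,3,4\}$ by placing the new vertex $0$ at a point $Q$ far to the right of $f(K_4)$ and drawing each edge $0j$ as the segment $Qf(j)$, so that along $f(K_4)$ it traces the ray $r_j$; after a further generic perturbation $\hat f$ is a good drawing. Now I would evaluate the mod $2$ number $v(\hat f):=\sum\operatorname{cr}(\alpha,\beta)$, the sum being over unordered pairs of non-adjacent edges $\alpha,\beta$ of $K_5$. Only two kinds of pairs contribute. Pairs with both edges in $K_4$ are exactly the three opposite pairs $\{12,34\},\{13,24\},\{14,23\}$, each contributing $0$ because $f$ is an almost embedding. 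Pairs of the form $\{0j,\,kl\}$ with $j\notin\{k,l\}$ contribute $\operatorname{cr}(0j,kl)=\#\bigl(r_j\cap f(kl)\bigr)$, since $Q$ lies to the right of everything and so all crossings occur before the ray reaches $Q$. Pairs using two apex edges, or an apex edge and an adjacent $K_4$-edge, are adjacent and do not arise. Summing over all admissible pairs $\{0j,kl\}$ gives $\sum_{j}\sum_{kl\in C_j}\#\bigl(r_j\cap f(kl)\bigr)\equiv\sum_{j=1}^4 w_f(j)\pmod 2$, so that $v(\hat f)\equiv\sum_{j=1}^4 w_f(j)\pmod 2$.

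Finally I would invoke the classical fact that $v(\hat f)=1$ for every PL map $K_5\to\R^2$. Here $v$ is a homotopy invariant: under a generic homotopy, crossings between two fixed independent edges change in pairs, except when an edge sweeps across a vertex, and each such move alters an even number (namely $0$ or $2$) of independent crossings in $K_5$, hence changes $v$ by $0\pmod 2$; evaluating $v$ on any one straight-line drawing of $K_5$ then gives $1$. Combined with the previous paragraph this yields $\sum_{j=1}^4 w_f(j)\equiv 1\pmod 2$.

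The conceptual heart of the argument is the homotopy-invariance and value $1$ of the van Kampen number of $K_5$, which is standard. The main obstacle is the bookkeeping in the reduction: one must check carefully that the extension $\hat f$ can be taken to be a good drawing, and that $\operatorname{cr}(0j,kl)$ really equals the ray count $\#\bigl(r_j\cap f(kl)\bigr)$ rather than picking up spurious intersections near $Q$ or among the apex edges (which all share the vertex $0$ and are therefore mutually adjacent).
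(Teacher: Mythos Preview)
Your argument is correct. The paper itself does not give a proof of this folklore statement; it only records (in a footnote and in Remark~\ref{rem:radonvankampen}) that the standard route is to show $\sum_{j=1}^4 w_f(j)\equiv \rho(f)\pmod 2$, where $\rho(f)$ is the Radon number, and then invoke the topological Radon theorem in the plane to conclude that $\rho(f)$ is odd.

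Your approach is genuinely different: instead of going through the Radon number, you cone $K_4$ to a $K_5$ by adding an apex far to the right, so that the new edges $0j$ serve as rays computing the parities of the $w_f(j)$, and you identify $\sum_j w_f(j)\bmod 2$ with the van~Kampen number $v(\hat f)$ of the resulting drawing of $K_5$. You then use the classical fact that $v$ is always $1$. In terms of the little diagram in Section~\ref{sec:intro_main}, the paper's indicated proof stays on the Radon side of the analogy, while yours crosses to the van~Kampen side. Your route has the advantage of staying entirely within graph drawings and avoiding any extension to the $2$-skeleton of $\Delta^3$; the Radon route has the advantage of not needing the auxiliary apex and of making the connection to Theorem~\ref{t:main}'s ``only relation'' context more transparent. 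One cosmetic point: your ``rightward ray $r_j$'' is really the ray from $f(j)$ toward $Q$, not a horizontal ray; this does not affect the parity computation of the winding number, but you may want to say so explicitly.
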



Our main result (Theorem~\ref{t:main}) is that this is the only relation between these four numbers.
\begin{theorem} \label{t:main}
    For any integers $n_1, n_2, n_3, n_4$ whose sum is odd there is an almost embedding $f: K_4 \to \mathbb{R}^2$ such that $w_f(j) = n_j$ for every $j = 1, 2, 3, 4$.
\end{theorem}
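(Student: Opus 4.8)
The plan is to construct the almost embedding explicitly, using the odd-sum constraint from Theorem~\ref{t:radonae} as the only input. The key observation is that the four winding numbers $w_f(1),\dots,w_f(4)$ are not independent as geometric quantities, but they become controllable once we build $f$ out of modular pieces. My strategy is to start from a single "base" almost embedding realizing some convenient small tuple (for instance $(1,0,0,0)$ or $(0,0,0,0)$ up to the parity fix), and then develop local moves that increment one coordinate at a time without disturbing the almost-embedding condition or the other coordinates.

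The main engine will be a **local twist operation**. Pick a vertex $j$ and a small arc on one of the three edges incident to $j$ that lies close to $f(j)$; replace this arc by a little loop that winds once around $f(j)$. Intuitively, threading the cycle $C_j$ through an extra full turn around $f(j)$ changes $w_f(j)$ by $\pm 1$. The delicate point is that $C_j$ does \emph{not} contain $j$, so one cannot simply wind an edge of $C_j$ around $f(j)$; instead one should wind the image of $C_j$ by inserting a loop on an appropriate subarc, keeping $f(j)$ inside or outside as needed. I would make this precise by choosing a tiny disk $D$ around $f(j)$ meeting $f(K_4)$ only in subarcs of the three edges of $C_j$ that happen to pass near $f(j)$, and performing a finger move that drags one such subarc around $f(j)$ and back. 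One must check two things: that the move changes exactly the target coordinate $w_f(j)$ by $1$, and that it preserves disjointness of non-adjacent edges (the only edge non-adjacent to a given edge of $K_4$ is its unique opposite edge, so the bookkeeping is finite and local).

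Granting such a move, the reduction is combinatorial. Given the target $(n_1,n_2,n_3,n_4)$ with odd sum, I would first realize a base tuple with the correct parity in each coordinate by a small explicit drawing (there are only $2^4$ parity classes of which exactly half have odd sum, and up to the symmetric group $S_4$ acting on $\{1,2,3,4\}$ there are very few orbits to exhibit by hand), and then apply the twist operation repeatedly, $|n_j - b_j|$ times at vertex $j$ with the appropriate orientation, to reach the exact target from the base tuple $(b_1,b_2,b_3,b_4)$.

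\textbf{The hard part} will be establishing that the local twist is genuinely independent across vertices, i.e.\ that winding a subarc of $C_j$ around $f(j)$ leaves $w_f(i)$ unchanged for $i \ne j$. This fails naively, because the subarc I modify belongs to some edge of $K_4$ that also participates in the cycles $C_i$ for the two vertices $i$ adjacent to that edge, so a careless twist will perturb their winding numbers too. The resolution I would pursue is to confine the entire twist to a disk $D$ so small that $f(i) \notin D$ for all $i \ne j$; then for $i \ne j$ the modification happens entirely on one side of $f(i)$ far away, and a homotopy argument shows $w_f(i)$ is locally constant under a move supported in a disk avoiding $f(i)$, provided the move does not sweep $f(i)$ across the curve. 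Verifying this non-sweeping condition — that the finger move can always be routed inside $D$ without the opposite edge obstructing it — is the crux, and it is exactly where the almost-embedding constraint (disjointness of the opposite edge's image from $D$) must be invoked to guarantee room to maneuver.
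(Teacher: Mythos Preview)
Your plan has a genuine gap at exactly the point you flag as ``the hard part,'' and the obstruction is structural, not merely technical. For a vertex $j$ of $K_4$, every edge of the cycle $C_j$ has as its unique non-adjacent edge one of the three edges \emph{incident to} $j$; hence the image of that opposite edge emanates from $f(j)$ and therefore meets every disk $D$ around $f(j)$. A loop added to an edge of $C_j$ encircling $f(j)$ inside $D$ must then cross the image of the opposite edge, so the almost-embedding condition is destroyed, not preserved. Your proposed escape---``disjointness of the opposite edge's image from $D$''---is simply false in $K_4$. One can see the impossibility without any geometry as well: a move that changes exactly one $w_f(j)$ by $\pm 1$ while keeping the map an almost embedding would flip the parity of $\sum_j w_f(j)$, contradicting Theorem~\ref{t:radonae}.

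The paper replaces the single-vertex twist by a move compatible with this parity constraint: one winds the edge $13$ around the \emph{entire} opposite edge $24$ (a simple closed curve $L$ enclosing the simple arc $g|_{24}$ can be chosen disjoint from it). Because both $g(2)$ and $g(4)$ lie inside $L$, while the edge $13$ occurs with opposite orientations in $C_2$ and $C_4$, this changes $w_f(2)$ by $+n$ and $w_f(4)$ by $-n$ and leaves $w_f(1),w_f(3)$ fixed---a sum-preserving shift. Such moves reach only a rank-one sublattice from any fixed base point, so the paper also constructs (via a pair of intertwined spirals on each opposite pair of edges) a three-parameter family of simple almost embeddings realizing all tuples $(m_1,-m_2,m_3,1-m_1-m_2-m_3)$, and then applies the finger move once. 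The odd-sum hypothesis enters precisely when one solves for the integer parameters needed to hit a prescribed $(n_1,n_2,n_3,n_4)$.
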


For analogous exercises, see \cite[Problems 6.2.b, 6.3.b]{ABM+} and \cite[Proposition~1.2]{KS20}. For analogous results in high-dimensional space, see \cite[Theorem~1.4]{KS20}, \cite[Theorem~1.3]{Nikkuni}.
Surprisingly, Theorem~\ref{t:k5-edge} is non-analogous result for graphs in the plane.
    


Theorem~\ref{t:radonae} is a corollary\footnote{See a deduction, for example, in \cite[\S5, sketch of a proof of Theorem~5.2]{ABM+}.}, and an analogue in terms of winding numbers, of the famous topological Radon Theorem\footnote{See the formulation in \cite[Theorem~2.2.2.a]{Sk18} or in \cite[Theorem~5.1.2, $d=2$]{Mat}.} for the plane.
The latter is closely related to the famous van Kampen-Flores Theorem\footnote{See the formulation in \cite[Theorem~1.4.1]{Sk18} or in \cite[Theorem~5.1.1, $d=1$]{Mat}; see the relation between them, for example, in \cite[\S4, `Appendix: relatives of the topological Radon theorem']{Sk16}.} for the plane.
Theorem~\ref{t:vankampenae} is a corollary\footnote{See the deduction, for example, in \cite[\S5, sketch of a proof of Theorem~5.5.a]{ABM+}.}, and an analogue in terms of winding numbers, of the van Kampen-Flores Theorem.
\begin{center}
    \begin{tikzpicture}[node distance=1cm, auto]
        \tikzstyle{theorem} = [minimum width=2.5cm, minimum height=1cm, text centered, align=center]

        \node (T1) [theorem] {Theorem~\ref{t:main}};
        \node (T2) [theorem, right of=T1, xshift=3cm] {Theorem~\ref{t:radonae}};
        \node (T3) [theorem, right of=T2, xshift=3.5cm] {Topological Radon \\ Theorem};

        \node (B1) [theorem, below of=T1, yshift=-0.5cm] {Theorem~\ref{t:k5-edge}};
        \node (B2) [theorem, below of=T2, yshift=-0.5cm] {Theorem~\ref{t:vankampenae}};
        \node (B3) [theorem, below of=T3, yshift=-0.5cm] {van Kampen-Flores \\ Theorem};

        \node at ($(T1)!0.5!(T2)$) {$\sim$};
        \node at ($(T2)!0.5!(T3)$) {$\Leftarrow$};
        \node at ($(B1)!0.5!(B2)$) {$\Rightarrow$};
        \node at ($(B2)!0.5!(B3)$) {$\Leftarrow$};
    \end{tikzpicture}
\end{center}
A recent `integer' version (Theorem~\ref{t:k5-edge}) of Theorem~\ref{t:vankampenae} 
suggests an analogous `integer' version of Theorem~\ref{t:radonae}. This `integer' version is obtained from Theorem~\ref{t:radonae} by replacing `$\ \sum_{j=1}^4 w_f(j) \equiv 1 \pmod 2$' with `$\ \sum_{j=1}^4 (-1)^jw_f(j) = \pm 1$' (see detailed explanation in Remark~\ref{rem:radonvankampen}). 
Theorem~\ref{t:main} refutes this suggestion. 

    

\section{Some motivation}
\label{sec:mot}
    This section is not formally used below.

    Denote by $K_5$ the complete graph on vertices $\{1, 2, 3, 4, 5\}$ and by $K_5 \setminus 45$ the graph obtained from $K_5$ by deleting the edge $45$. 
    The definition of an almost embedding of $K_5 \setminus 45$ is analogous to the definition for $K_4$.


    \begin{theorem}[folklore]
    \label{t:vankampenae}
        For any continuous almost embedding $g : K_5 \setminus 45 \to \mathbb{R}^2$ we have
        $$
        w_g(123, 4) - w_g(123, 5) \equiv 1 \pmod 2.
        $$
    \end{theorem}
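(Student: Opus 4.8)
The plan is to close up the missing edge $45$ and reduce the statement to the van Kampen obstruction for $K_5$. First I would record that both winding numbers in the statement are defined. Applying the almost embedding hypothesis to the vertex-disjoint edge pairs $\{14,23\}$, $\{24,13\}$, $\{34,12\}$ shows that $g(4)$ lies on none of $g|_{23},g|_{13},g|_{12}$, hence $g(4)\notin g|_{123}$; the symmetric argument with $15,25,35$ gives $g(5)\notin g|_{123}$. Next I would extend $g$ to a piecewise-linear map $\overline{g}\colon K_5\to\R^2$ by letting $\overline{g}|_{45}$ be a polygonal arc from $g(4)$ to $g(5)$ chosen in general position with respect to the (unchanged) image $g(K_5\setminus 45)$: transverse to it, meeting it in finitely many interior points, and avoiding all vertex-images $g(1),\dots,g(5)$.

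The argument then rests on two ingredients. The first is the elementary identity relating winding numbers to crossings: for a closed polygonal curve $\gamma$ and two points $P,Q\notin\gamma$, the difference $w(\gamma,P)-w(\gamma,Q)$ equals, modulo $2$, the number of transverse intersection points of $\gamma$ with any polygonal arc joining $P$ and $Q$. Applied to $\gamma=g|_{123}$ and the arc $\overline{g}|_{45}$, this yields
$$ w_g(123,4)-w_g(123,5)\ \equiv\ \bigl|\,\overline{g}|_{45}\cap g|_{123}\,\bigr|\pmod 2. $$
The second ingredient is the (folklore) van Kampen--Flores parity for $K_5$: for every generic piecewise-linear map $h\colon K_5\to\R^2$, the total number of intersection points, summed over all $15$ unordered pairs of vertex-disjoint edges of $K_5$, is odd. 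I would apply this to $h=\overline{g}$ and split the $15$ disjoint pairs into the $12$ pairs not meeting the edge $45$ and the $3$ pairs $\{45,12\},\{45,13\},\{45,23\}$ that contain it. The first $12$ pairs consist of vertex-disjoint edges of $K_5\setminus 45$, so by the almost embedding hypothesis each contributes $0$ intersection points; adjoining $\overline{g}|_{45}$ changes nothing here since $g$ is untouched on those edges. The last $3$ pairs contribute $\bigl|\overline{g}|_{45}\cap g|_{12}\bigr|+\bigl|\overline{g}|_{45}\cap g|_{13}\bigr|+\bigl|\overline{g}|_{45}\cap g|_{23}\bigr|=\bigl|\overline{g}|_{45}\cap g|_{123}\bigr|$, the arc avoiding the shared vertices $g(1),g(2),g(3)$. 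Hence $\bigl|\overline{g}|_{45}\cap g|_{123}\bigr|\equiv 1\pmod 2$, and combined with the displayed identity this gives $w_g(123,4)-w_g(123,5)\equiv 1\pmod 2$.

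I expect the only genuinely substantial point to be the van Kampen--Flores parity itself, namely that the disjoint-pair crossing count of an arbitrary generic map of $K_5$ is a well-defined $\Z_2$-invariant equal to $1$; this is precisely the planar van Kampen--Flores theorem, which we are permitted to cite, so the hard part is imported rather than proved. Everything else is bookkeeping: verifying the winding-number/crossing identity, checking that the extension $\overline{g}|_{45}$ can be placed in general position without disturbing the $12$ "old" disjoint pairs, and confirming that the three pairs through $45$ reassemble into the crossing count with the triangle $g|_{123}$.
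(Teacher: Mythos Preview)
Your proposal is correct and follows essentially the same route the paper points to: the paper does not spell out a proof of this folklore statement but records (in the footnote and in Remark~\ref{rem:radonvankampen}) that it is deduced from the van Kampen--Flores theorem via the identity $w_g(123,4)-w_g(123,5)\equiv v(\overline g)\pmod 2$, which is precisely the mechanism you implement. Your write-up is more explicit than the paper's reference, correctly isolating the $12+3$ split of disjoint edge-pairs and the crossing/winding identity; the only routine detail you might make explicit is the passage from a continuous almost embedding to a piecewise-linear one before extending by $\overline g|_{45}$.
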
 

    The following non-trivial result (motivated by \cite{KS20}, see detailed motivation in \cite[Remark~2.b]{Ga23}) on winding numbers for a continuous almost embedding of $K_5 - 45$ was proved recently.

    \begin{theorem}[{\cite[Theorem~1]{Ga23}}]
    \label{t:k5-edge}
        For any continuous almost embedding $g : K_5 \setminus 45 \to \mathbb{R}^2$ we have
        $$
        w_g(123, 4) - w_g(123, 5) = \pm 1.
        $$
    \end{theorem}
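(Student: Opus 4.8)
The plan is to deduce the statement from the parity relation already available in Theorem~\ref{t:vankampenae} together with a sharp bound. Write $T := g|_{123}$ for the image of the oriented triangle $123$, and put $\Delta := w_g(123,4) - w_g(123,5)$. Since Theorem~\ref{t:vankampenae} gives $\Delta \equiv 1 \pmod 2$, it suffices to prove the bound $|\Delta| \le 1$: the two facts together force $\Delta = \pm 1$. I would first record the elementary fact that for a closed polygonal curve $\gamma$ and two points $p,q \notin \gamma$, the difference $w(\gamma,p) - w(\gamma,q)$ equals the signed number of crossings of $\gamma$ with any polygonal path from $q$ to $p$ in general position (the winding number is locally constant and jumps by $\pm 1$ across $\gamma$, with sign the local orientation). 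Thus $\Delta$ is the signed intersection number of $T$ with a path from $g(5)$ to $g(4)$, and the goal becomes: $g(4)$ and $g(5)$ lie in the same region of $\R^2 \setminus T$, or in two regions whose winding labels differ by $1$.

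To exploit the structure special to $K_5 \setminus 45$ (and absent for $K_4$, cf. Theorem~\ref{t:main}), I would view $K_5 \setminus 45$ as the $1$-skeleton of the suspension of the triangle $123$: the triangular bipyramid $X$, whose underlying space is $S^2$, has apexes $4,5$, equator the cycle $123$, and six $2$-faces $4ij$, $5ij$. Extend $g$ to a piecewise-linear map $F \colon X \to \R^2$ over the six $2$-faces. Choose a polygonal arc $\alpha \subset X$ from apex $4$ to apex $5$ that runs through the two faces $4jk$ and $5jk$ sharing a single equator edge $jk$, so that $\alpha$ meets the equator once, transversally, at an interior point of $jk$. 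Since the equator separates the two apexes in $S^2$, the algebraic intersection number $\alpha \cdot (123)$ equals $\pm 1$; hence $F(\alpha)$ is a path from $g(4)$ to $g(5)$ whose crossings with $T$ split into one unavoidable ``honest'' crossing of sign $\pm 1$, coming from $\alpha$ meeting the equator inside $X$, plus ``accidental'' crossings, i.e. double points of $F$ between $\alpha$ and the equator. Consequently $\Delta = \pm 1 + (\text{signed count of accidental crossings})$, and the entire content of the theorem is that the accidental crossings cancel in the signed count, equivalently $|\Delta| \le 1$.

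The main obstacle is precisely this cancellation, and my plan to reach it has two ingredients. First, I would use the almost-embedding hypothesis to restrict which edges can meet $T$ at all: for each edge $jk$ of the triangle, the edge $4i$ (resp.\ $5i$) with $\{i\} = \{1,2,3\} \setminus \{j,k\}$ is non-adjacent to $jk$, so $g(4i) \cap g(jk) = \varnothing$; in particular $g(4) \notin g(jk)$ and $g(5) \notin g(jk)$. With $\alpha$ confined to the two faces meeting along $jk$, I would match the remaining accidental crossings into cancelling pairs using these disjointness relations together with a local sign analysis at the triangle vertices $g(i)$. Second, as a tool for reaching a convenient normal form, I would use that $w_g(123,4)$ and $w_g(123,5)$ are each invariant under any deformation of $g$ through almost embeddings: a crossing event would require $g(4) \in g(jk)$ for some triangle edge $jk$, which (taking $i$ as above) forces the non-adjacent edges $g(4i)$ and $g(jk)$ to share the point $g(4)$, which is impossible. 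Hence $\Delta$ is an almost-isotopy invariant, and it is enough to verify $|\Delta| \le 1$ on one representative of each almost-isotopy class. I expect the cancellation (equivalently, the normal-form step) to be the hard part; the reduction to the bound, the parity input from Theorem~\ref{t:vankampenae}, and the bookkeeping of crossings should all be routine.
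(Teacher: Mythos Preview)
The paper does not prove Theorem~\ref{t:k5-edge}; it is quoted from \cite{Ga23} as motivation, so there is no in-paper proof to compare your proposal against.

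Regarding the proposal itself, there is a genuine gap. Your reduction to the bound $|\Delta|\le 1$ and the observation that $\Delta$ is an almost-isotopy invariant are both correct, but neither advances toward the bound (almost-isotopy classes of such maps are not classified). The suspension construction, as you set it up, carries no information: the extension $F$ is unconstrained on the interiors of the two $2$-faces $4jk$ and $5jk$, so $F(\alpha)$ can be \emph{any} path from $g(4)$ to $g(5)$ that passes through a chosen point of $g|_{jk}$. Consequently ``the accidental crossings cancel'' is literally the statement $\Delta=\pm 1$ that you are trying to prove, and nothing in your setup has yet used the almost-embedding hypothesis beyond what is needed to make $\Delta$ well-defined. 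The disjointness relations you single out, $g(4i)\cap g(jk)=\varnothing$ and $g(5i)\cap g(jk)=\varnothing$, only guarantee $g(4),g(5)\notin T$; they impose no relation between the two winding numbers.

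What your outline never touches are the remaining almost-embedding constraints $g(4i)\cap g(5j)=\varnothing$ for $i\ne j$ (the edges $4i$ and $5j$ share no vertex in $K_5\setminus 45$). These are precisely the conditions that couple the position of $g(4)$ to that of $g(5)$ through the six edges to $1,2,3$, and they are what distinguishes $K_5\setminus 45$ from $K_4$, where Theorem~\ref{t:main} shows that no such bound holds. Any argument for $|\Delta|\le 1$ must bring these constraints in; until you explain how they enter, the ``local sign analysis'' and ``matching into cancelling pairs'' are placeholders rather than a plan.
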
 

    \begin{remark}[relation to the Radon and van Kampen numbers]
    \label{rem:radonvankampen}
        Let $f: K_4 \to \mathbb{R}^2$ and $g: K_5 - 45 \to \mathbb{R}^2$ be general position almost embeddings.

        Let $\overline{g}: K_5 \to \mathbb{R}^2$ be a general position piecewise-linear extension of the map $g$.

         It is known that 
        $$\sum_{j=1}^4 w_f(j) \equiv \rho(f) \pmod 2 \quad \text{ and } \quad w_g(123, 4) - w_g(123, 5) \equiv v(\overline{g}) \pmod 2$$
        where $\rho(f)$ is the Radon number of $f$, and $v(\overline{g})$ is the van Kampen number of $\overline{g}$
        (see definitions and proofs in \cite{Sk18} and \cite[\S5, Sketches of the proofs of Theorem~5.2 and Theorem~5.5.a]{ABM+}).

        There is no `integer' version of the van Kampen-Flores Theorem, see details in \cite[Remark~6 (integer versions of van Kampen Theorem)]{Ga23}.  
        However, Theorem~\ref{t:k5-edge} is an `integer' analogue of the difference $w_g(123, 4) - w_g(123, 5)$ being odd.
    \end{remark}

    The following definition of an almost embedding for arbitrary $K$ is slightly different from (but, in case $K = K_4, K_5 \setminus 45$, is equivalent to) the definition of an almost embedding in Section~\ref{sec:intro_main}. 
    A (piecewise-linear, or continuous) map $f:K\to\R^2$ is called an 
    \textbf{almost embedding} if $f(\alpha)\cap f(\beta) = \varnothing$ for any two non-adjacent simplices (i.e. vertices or edges) $\alpha,\beta\subset K$.

    Our main result (Theorem~\ref{t:main}) and Theorem~\ref{t:k5-edge} are steps towards the  following open problem.

\begin{op}[{\cite[Problem~5.7]{ABM+rus}}]
    Fix a finite graph $K$.
    For an almost embedding $f:K \to \mathbb{R}^2$ consider the collection $w_f(C, v)$ of integers, where $v \in K$ is a vertex, and $C \subset K \setminus v$ is an oriented simple cycle. 
    Describe the collections realizable by almost embeddings $K\to\R^2$.
\end{op}

Theorem~\ref{t:main} gives the answer to the open problem in case of $K = K_4$. 
The open problem for $K$ a proper subgraph of $K_4$ is simple.

\arxivonly{
\footnote{A simpler version of Theorem~\ref{t:main} where `sum is odd' is replaced with `alternating sum is equal to $\pm1$' was proved in \cite[\S5, Example 5.10]{ABM+rus}  (this proof uses only the idea of `finger moves' from Section~$\ref{sec:finger_move}$).
This simpler version gives the answer to the open problem for any proper subgraph $K$ of $K_4$ (maybe there exists a simpler direct way to obtain the answer in case of $K$ being a proper subgraph of $K_4$).}
}

The answer to the open problem in the following interesting cases is unknown:
\begin{itemize}
    \item $K$ is obtained from $K_5$ by deleting an edge (cf. Theorem~\ref{t:k5-edge});
    \item $K$ is obtained from $K_{3, 3}$ by deleting an edge (cf. \cite[\S5, 5.7.b]{ABM+});
    \item $K$ is the graph of a cube or an octahedron (cf. \cite[\S5, 5.8]{ABM+}).
\end{itemize}

\section{Basic definitions}
\label{sec:basic_def}
If you are familiar with the definitions of an oriented angle, a polygonal line and its concatenation, and a piecewise-linear map, you can skip this section.

    
    In the plane let $O, A, B, A_1, \ldots, A_m$ be points.
    
    Assume that $A \neq O$ and $B \neq O$ (but possibly $A = B$). Recall that the oriented (a.k.a.
    directed) angle $\angle AOB$ is the number $t \in (-\pi, \pi]$ such that the vector $\overrightarrow{OB}$ is codirected to the vector obtained from $\overrightarrow{OA}$ by the rotation through $t$. 

    A {\bf polygonal line} $A_1 \ldots A_m$ is a tuple $( A_1, A_2, \ldots , A_m )$ of points in the plane. A
    {\bf closed polygonal line} $A_1 \ldots A_m$ is a tuple $( A_1, A_2, \ldots , A_m, A_1 )$ of points in the plane.
    We also denote by $A_1 \ldots A_m$ the unions $\bigcup_{i = 1}^{m-1} A_i A_{i+1}$ and $\bigcup_{i = 1}^{m-1} A_iA_{i+1} \cup A_mA_1$ of segments for non-closed and closed polygonal lines, respectively.
    
    If the last vertex of a polygonal line $l_1$ coincides with the first vertex of a polygonal line $l_2$, then we denote by $l_1l_2$ the polygonal line which is the concatenation $l_1$, $l_2$.
    For a closed polygonal line $l$ and a positive integer $n$ by $l^n$ we denote the concatenation $\underbrace{l \ldots l}_{n \text{ times}}$.
    For a polygonal line $l$ or a closed polygonal line $l$ by $l^{-1}$ denote the tuple of points of $l$ in reverse order. 
    We shorten $(l^{-1})^n = (l^n)^{-1}$ to $l^{-n}$.
    


    Let $K$ be a graph with $V$ vertices. 
    A {\bf piecewise-linear map} $f:K \to \R^2$ of $K$ to the plane is 
    \begin{itemize}
        \item a collection of $V$ points in the plane corresponding to the vertices, and  
        \item  a collection of (non-closed) polygonal lines in the plane joining those pairs of points from the collection which correspond to pairs of adjacent vertices.
    \end{itemize}

    More precisely, each edge corresponds not to a polygonal line, but to a pair of polygonal lines obtained from each other by passing in the reverse order.
    Setting the orientation on the edge of the graph sets the choice of one of these two polygonal lines.


\section{A reduction of Theorem \ref{t:main} 
}

Theorem~\ref{t:main} is deduced from Lemmas~\ref{l:str_al_em} and \ref{l:n_finger_moves} in this section. 

An almost embedding $f: K_4 \to \mathbb{R}^2$ is said to be \emph{simple} if for every edge $\sigma$ of $K_4$ the polygonal line $f|_\sigma$ does not have self-intersections (i.e. is simple).

 \begin{lemma}
\label{l:str_al_em}
    For any integers $m_1, m_2, m_3$ there exists a simple almost embedding $f: K_4 \to \mathbb{R}^2$ such that 
    $$w_f(1) = m_1, \qquad w_f(2) = -m_2, \qquad w_f(3) = m_3, \qquad w_f(4) = 1 - m_1 - m_2 - m_3.$$
\end{lemma}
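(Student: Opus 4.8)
The plan is to start from one explicit base drawing and then modify it by local \emph{gadgets}, each changing the vector $(w_f(1),w_f(2),w_f(3),w_f(4))$ by a prescribed integer vector. For the base I take the straight-line drawing in which $f(1),f(2),f(3)$ are the vertices of a counterclockwise triangle, $f(4)$ lies in its interior, and all six edges are segments. This is a simple almost embedding, and reading off the four numbers gives $w_f(1)=w_f(2)=w_f(3)=0$ and $w_f(4)=1$: each $f(j)$ with $j\le 3$ lies outside the opposite triangle, while $f(4)$ lies inside $f|_{C_4}$, traversed positively. Thus the base realises $m_1=m_2=m_3=0$, and it remains to move $(0,0,0,1)$ to $(m_1,-m_2,m_3,1-m_1-m_2-m_3)$ by gadgets that each keep the map a simple almost embedding.

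The first, easy, gadget is attached to a single edge. For an edge $ab$ let $cd$ be its unique non-adjacent edge; reroute $f|_{ab}$ so that it makes one extra loop (a lasso) around the whole arc $f|_{cd}$, encircling both $f(c)$ and $f(d)$, and then continues as before. Since the lasso can be drawn disjoint from $f|_{cd}$, and $ab$ is non-adjacent only to $cd$, the result is again a simple almost embedding. The edge $ab$ lies only in the cycles $C_c$ and $C_d$, so $w_f(a)$ and $w_f(b)$ are untouched. A direct computation with the oriented-angle formula for the winding number shows each of $w_f(c),w_f(d)$ changes by the winding of the inserted lasso around $f(c)$, resp.\ $f(d)$; as the lasso may not cross $f|_{cd}$ it cannot separate $f(c)$ from $f(d)$, so these two windings are equal, say $s\in\{+1,-1\}$. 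Taking into account the direction in which $ab$ is traversed in $C_c$ and in $C_d$, one gets the change $(s,s)$ on $(w_f(c),w_f(d))$ when the two traversals agree, and $(s,-s)$ when they are opposite. Applied to $ab=34$ and to $ab=14$ (the agreeing cases) this yields the displacements $g_1=(1,1,0,0)$ and $g_2=(0,1,1,0)$ (winding the lasso $k$ times multiplies by $k$); both preserve the alternating sum $-w_f(1)+w_f(2)-w_f(3)+w_f(4)$.

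These alternating-sum–preserving gadgets are not enough, since the target has alternating sum $1-2(m_1+m_2+m_3)$, which I must also be able to change. The only single-edge gadget touching the pair $\{2,4\}$ is the one on edge $13$, but there the traversals in $C_2$ and $C_4$ disagree, so it produces the change $(s,-s)$ and again fixes the alternating sum. What I need is a gadget $h$ producing the \emph{same-sign} change $(1,1)$ on $(w_f(2),w_f(4))$, i.e.\ making $f|_{C_2}$ wind once more around $f(2)$ and $f|_{C_4}$ wind once more around $f(4)$ in the same sense. This cannot be obtained from any composition of the single-edge gadgets (all of them fix the alternating sum), and it forces a genuine double winding: applied to the base it already demands that $f|_{C_4}$ wind twice around $f(4)$. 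The hard part is to realise such a double winding inside a simple almost embedding, because once $f(4)$ is enclosed twice by the cycle $123$, its three edges $14,24,34$ must still reach $f(4)$ from outside while each avoids its forbidden partner $23,13,12$. I would build $h$ as an explicit spiral in which the three edges of the winding cycle share the two turns among three angular sectors, and the three edges leaving the central vertex are \emph{cabled}, i.e.\ run in thin parallel channels alongside the cycle so that each of them crosses only its two permitted cycle-edges and never its forbidden partner. Verifying that all three forbidden pairs stay disjoint throughout this spiral (the ``escape'' problem) is the main obstacle of the whole proof; the winding-number effect is then a further oriented-angle computation giving $h=(0,1,0,1)$.

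Finally I assemble the drawing. Starting from the base $(0,0,0,1)$ I insert $g_1$ with its loop wound $m_1$ times, $g_2$ with its loop wound $m_3$ times, and the winding gadget $h$ wound $-(m_1+m_2+m_3)$ times (a negative count meaning the oppositely oriented gadget); performing the three in disjoint regions of the plane keeps them independent. The net displacement is $m_1 g_1+m_3 g_2-(m_1+m_2+m_3)h=(m_1,\,m_1+m_3-(m_1+m_2+m_3),\,m_3,\,-(m_1+m_2+m_3))=(m_1,-m_2,m_3,-(m_1+m_2+m_3))$, and adding the base vector $(0,0,0,1)$ gives exactly $(m_1,-m_2,m_3,1-m_1-m_2-m_3)$, as required. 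The outcome is a simple almost embedding because each gadget preserved this property. The only genuinely delicate point is the construction and disjointness check for the winding gadget $h$ of the third paragraph; everything else is elementary plane geometry together with additive bookkeeping of oriented angles.
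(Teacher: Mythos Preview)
Your outline is reasonable and the bookkeeping with $g_1,g_2,h$ is correct, but the proof has a real gap: you do not construct $h$. You yourself flag the disjointness of the three forbidden pairs in the double-wound spiral as ``the main obstacle of the whole proof'' and then leave it unresolved. Moreover, even if a simple almost embedding with $w_f(4)=2$ exists, you have not checked that your proposed cabled-spiral picture yields the \emph{full} effect $(0,1,0,1)$: modifying the three outer edges and cabling the three radial edges changes every $C_j$, so the claims $\Delta w_f(1)=\Delta w_f(3)=0$ require an argument, not just a picture. (A minor point: a ``lasso'' with a neck is not a simple arc; what you need for $g_1,g_2$ is a simple \emph{spiral} of $f|_{ab}$ around the arc $f|_{cd}$, which does exist and has the effect you state.)

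The paper avoids your global gadget entirely. It fixes $f(1),f(2),f(3)$ as a regular triangle with centre $f(4)$ and then treats the three non-adjacent pairs $(i4,jk)$ \emph{independently}: for each pair it builds two disjoint simple arcs that spiral around each other inside a thin annulus about the segment $f(j)f(4)$ (this is their Lemma~6.1). The single integer parameter $m$ in that lemma controls $w'(A\ldots O,B)$ and $w'(B\ldots C,O)$ simultaneously (by $+m$ and $-m$), while $w'(A\ldots O,C)$ and $w'(B\ldots C,A)$ stay fixed. Applied to the pair $(14,23)$, one unit of this parameter changes $(w_f(1),w_f(2),w_f(3),w_f(4))$ by exactly $(0,-1,0,-1)$, which is (up to sign) your missing $h$. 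In other words, the paper's key lemma \emph{is} a construction of $h$; and once you have it, it is cleaner to use it on all three pairs and drop $g_1,g_2$ altogether, which is what the paper does. So your scheme is salvageable, but the piece you are missing is precisely the paper's Lemma~6.1: two intertwined simple spirals in an annular neighbourhood of one segment, rather than a single double-wound cycle with three cabled spokes.
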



\begin{lemma}\label{l:n_finger_moves}
    For any integer $n$ and simple almost embedding $g: K_4 \to \R^2$ there exists an almost embedding $f: K_4 \to \R^2$ such that
    $$w_f(1) = w_g(1), \qquad w_f(2) = w_g(2) + n, \qquad w_f(3) = w_g(3), \qquad w_f(4) = w_g(4) - n.$$
\end{lemma}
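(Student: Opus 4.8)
The plan is to obtain $f$ from $g$ by a single \emph{finger move} supported on the edge $13$. The choice of this edge is forced by a purely combinatorial observation: an edge $ab$ of $K_4$ lies on the cycle $C_j$ exactly when $j\notin\{a,b\}$, so the edge $13$ lies on $C_2$ and $C_4$ but on neither $C_1$ nor $C_3$; moreover the only edge of $K_4$ non-adjacent to $13$ is $24$. Hence any modification supported on the interior of $g|_{13}$ can affect only $w_f(2)$ and $w_f(4)$, and, as far as the almost-embedding condition is concerned, only has to remain disjoint from $g|_{24}$. The mechanism producing the required opposite signs $+n$ and $-n$ is also built in: in $C_2=1\to3\to4\to1$ the edge $13$ is traversed as $1\to3$, whereas in $C_4=1\to2\to3\to1$ it is traversed as $3\to1$, i.e. in the opposite direction.

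The construction I would write down keeps all vertices of $g$ and all edges except $13$. I would choose a simple closed polygonal line $L$, oriented counterclockwise, whose interior $R$ contains the whole arc $g|_{24}$ (so in particular $g(2),g(4)\in R$), a point $P$ in the interior of $g|_{13}$, and a polygonal path $\gamma$ from $P$ to a base point $Q\in L$, arranging that both $\gamma$ and $L$ are disjoint from $g|_{24}$. Writing $g|_{13}=g|_{1P}\,g|_{P3}$, I would define the new oriented edge
$$ f|_{13} := g|_{1P}\ \gamma\ L^{n}\ \gamma^{-1}\ g|_{P3}, $$
where $L^n$ is the loop $L$ based at $Q$ traversed $n$ times (with $L^n=(L^{-1})^{|n|}$ for $n<0$). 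This has the same endpoints $g(1),g(3)$ as $g|_{13}$; it is allowed to intersect itself and to cross the adjacent edges $12,14,23,34$, which is exactly why $f$ need not be simple.

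The verification then splits into two routine parts. For the almost-embedding property, the only non-adjacent pair touched is $(13,24)$, and each piece of $f|_{13}$ is disjoint from $g|_{24}$ by construction, while all other non-adjacent pairs are inherited unchanged from $g$. For the winding numbers, since $C_1$ and $C_3$ do not contain $13$ their restrictions are unchanged, giving $w_f(1)=w_g(1)$ and $w_f(3)=w_g(3)$. The detour $\gamma\,L^n\,\gamma^{-1}$ is a loop based at $P$ whose winding number around any point $p\notin f|_{13}$ equals $n\cdot w(L,p)$ (conjugation by $\gamma$ is trivial since $\pi_1(\R^2-p)$ is abelian, so the out-and-back parts cancel), and inserting such a based loop into a closed polygonal line adds this amount to the winding number around $p$ by additivity of the oriented-angle sum. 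Taking $p=g(2)\in R$ with $w(L,p)=+1$, and recalling that $C_2$ runs through $13$ in the $1\to3$ direction, yields $w_f(2)=w_g(2)+n$; taking $p=g(4)\in R$ and recalling that $C_4$ runs through $13$ in the $3\to1$ direction (so it meets $L^{-n}$) yields $w_f(4)=w_g(4)-n$. These winding numbers are defined because $g(2),g(4)\notin f|_{13}$, again since $f|_{13}$ avoids $g|_{24}\ni g(2),g(4)$.

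The one step that genuinely uses the hypotheses, and which I regard as the only real point to nail down, is the existence of $L$ and $\gamma$ with the stated disjointness. Here simplicity of $g$ enters: $g|_{24}$ is a simple polygonal arc, which does not separate the plane, so $\R^2-g|_{24}$ is connected. Thus one can first draw a small polygonal Jordan curve $L$ encircling the compact arc $g|_{24}$, and then, because the non-adjacent edges $13$ and $24$ satisfy $g|_{13}\cap g|_{24}=\varnothing$, join a point $P\in g|_{13}$ to $L$ by a polygonal path $\gamma$ inside $\R^2-g|_{24}$. All of this is realized piecewise-linearly, and it is precisely the finger move of Section~\ref{sec:finger_move}; I would simply invoke that move once on the edge $13$ with the loop $L$ and then record the four winding-number equalities above.
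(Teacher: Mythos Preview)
Your proposal is correct and follows essentially the same approach as the paper: a finger move on the edge $13$ around a counterclockwise Jordan curve $L$ enclosing $g|_{24}$, connected via an auxiliary path to $g|_{13}$, followed by the same winding-number bookkeeping via additivity and the opposite orientations of $13$ in $C_2$ versus $C_4$. The only cosmetic differences are that the paper attaches the loop at the endpoint $g(3)$ rather than at an interior point of $g|_{13}$, and additionally arranges $L$ to separate $g|_{13}$ from $g|_{24}$ (which you correctly note is unnecessary for the almost-embedding check).
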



\begin{proof}[Deduction of Theorem~\ref{t:main} from Lemmas~\ref{l:str_al_em}~and~\ref{l:n_finger_moves}]
    Since the sum $n_1 + n_2 + n_3 + n_4$ is odd, the numbers $a := (1 - n_1 - n_2 - n_3 - n_4)/2$ and $b := (1 - n_1 + n_2 - n_3 - n_4)/2$ are integers. 
    Using Lemma~\ref{l:str_al_em} for $m_1 := n_1$, $m_2 := a$, $m_3 := n_3$, we obtain a simple almost embedding $g: K_4 \to \mathbb{R}^2$ such that 
    $$w_g(1) = n_1, \qquad w_g(2) = -a = n_2 - b, \qquad w_g(3) = n_3, \qquad w_g(4) = 1 - n_1 - a - n_3 = n_4 + b.$$
    
    To get the required map it suffices to use Lemma~\ref{l:n_finger_moves} for  $g$ and $n := b$.

\end{proof}

\section{Winding number of non-closed polygonal line and its properties}

In the proofs of Lemma~\ref{l:str_al_em} and Lemma~\ref{l:n_finger_moves} we use the following notion.

Let $A_1\ldots A_m$ be a polygonal line not passing through a point $O$. 
Define the real number $w'(A_1\ldots A_m,O)$ by 
$$2\pi \cdot w'(A_1\ldots A_m,O) := \angle A_1OA_2+\angle A_2OA_3+\ldots+\angle A_{m-1}OA_m.$$

Clearly, for any polygonal line $A_1  \ldots A_k \ldots A_m$ not passing through $O$ we have
\begin{equation}\label{eq:add}
    w'(A_1 \ldots A_m,O) = w'(A_1 \ldots A_k,O) + w'(A_k \ldots A_m,O);
    \tag{$*$}
\end{equation}
\begin{equation}\label{eq:inv}
    w'(A_m \ldots A_1,O) = -w'(A_1 \ldots A_m,O); 
    \tag{$**$}
\end{equation}
\begin{equation}\label{eq:conn}
    w'(A_1 \ldots A_{m-1} A_m, O) = w(A_1 \ldots A_{m-1}, O) \  \text{if $A_1 = A_m$}.
    \tag{$**$$*$}
\end{equation}

Let $f:K\to\R^2$ be a piecewise-linear map. For a vertex $v$ and an edge $ij$ in $K$ such that $f(v) \notin f(ij)$ denote 
$$w'_f(ij, v) := w'(f|_{ij} ,f(v)).$$
Then by the properties (\ref{eq:add}) and (\ref{eq:conn}), for any almost embedding $f: K_4 \to \mathbb{R}^2$, and vertices $\{j, a, b, c\} = \{1, 2, 3, 4\}$ of $K_4$ such that $C_j = abc$, we have
\begin{equation}\label{eq:tri}
    w_f(j) = w'_f(ab, j) + w'_f(bc, j) + w'_f(ca, j).
    \tag{$**$$**$}
\end{equation} 


\section{Proof of Lemma~\ref{l:str_al_em}}
First, we define $f(1), f(2)$ and $f(3)$ as vertices of a regular triangle, which are  numbered counterclockwise (i.e. such that $\angle f(2)f(1)f(3) > 0$).
Define $f(4)$ as the center of triangle $f(1)f(2)f(3)$.

Since for any edge of $K_4$ there is only one non-adjacent edge, we construct an almost embedding $f$ independently for those three pairs of non-adjacent edges.


\begin{lemma}
\label{l:construction}
    Let $A$ be a vertex of a regular triangle $ABC$ oriented counterclockwise (i.e. such that $\angle BAC > 0$).
    Denote by $O$ its center.
    For any integer $m$ there exist two disjoint simple polygonal lines $A \ldots O$ and $B \ldots C$ joining $A$ to $O$ and $B$ to $C$, respectively, such that
    $$
    w'(B \ldots C, A) = \frac{1}{6}, \qquad 
    w'(A \ldots O, B) = -\frac{1}{12} + m, \qquad
    w'(A \ldots O, C) = \frac{1}{12}, \qquad 
    w'(B \ldots C, O) = \frac{1}{3} - m.
    $$
\end{lemma}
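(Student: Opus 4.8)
The plan is to build the two lines by starting from the straight segments and then grafting on an explicit spiral that carries the integer parameter $m$.

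First I would dispose of the case $m=0$ by taking $A\ldots O$ and $B\ldots C$ to be the straight segments $AO$ and $BC$. Since $ABC$ is a regular triangle oriented counterclockwise and $O$ is its center, the four relevant oriented angles $\angle BAC$, $\angle ABO$, $\angle ACO$, $\angle BOC$ are multiples of $\pi/6$, and evaluating them directly gives $w'(BC,A)=\tfrac16$, $w'(AO,B)=-\tfrac1{12}$, $w'(AO,C)=\tfrac1{12}$, $w'(BC,O)=\tfrac13$. The segment from a vertex to the center is disjoint from the opposite side, so these two lines already realize the required configuration for $m=0$.

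For general $m$ I would keep $A\ldots O$ and $B\ldots C$ equal to these segments outside a small disk $D$ centered at $O$ (chosen so small that $A,B,C\notin D$), and replace the portion of $B\ldots C$ inside $D$ by a spiral winding $m$ times around $O$. Using the additivity, reversal and closing properties (\ref{eq:add})--(\ref{eq:conn}) of $w'$, such a spiral changes $w'(BC,O)$ to $\tfrac13-m$ and, since the detour stays inside $D$ and hence does not wind around $A$, leaves $w'(BC,A)=\tfrac16$ untouched. The difficulty is that $A\ldots O$ must still end at $O$, which now lies inside the spiral, so the segment $AO$ must be rerouted to reach $O$ through the opening of the spiral while remaining disjoint from it.

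The heart of the proof, and the step I expect to be the main obstacle, is that this rerouting cannot be done freely: because $B\ldots C$ joins $B$ to $C$, any line disjoint from it has its winding numbers around $B$ and around $C$ differing by a quantity fixed by $B\ldots C$ alone, so one cannot simply splice a small loop of $A\ldots O$ around $B$ (such a loop would necessarily cross $B\ldots C$ or also enclose $C$). Instead I would orient the spiral and place its opening on the side of $O$ facing $B$, so that the detour which $A\ldots O$ is \emph{forced} to make around the spiral is compelled to wind $m$ times around $B$ without ever enclosing $C$; this produces $w'(AO,B)=-\tfrac1{12}+m$ while keeping $w'(AO,C)=\tfrac1{12}$. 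The subtle feature being exploited is that the winding of $B\ldots C$ around $O$ (governed by the orientation and multiplicity of the spiral) and the induced winding of $A\ldots O$ around $B$ (governed by where the spiral opens and how $A\ldots O$ is threaded through it) are \emph{independent}, so they can be prescribed with opposite signs $-m$ and $+m$ as the statement demands. The remaining work is bookkeeping: confirming via (\ref{eq:add})--(\ref{eq:conn}) that all four winding numbers come out exactly as stated, and checking that the spiral together with the threaded line $A\ldots O$ can be realized by simple, pairwise disjoint polygonal lines.
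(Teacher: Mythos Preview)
Your proposal has a genuine gap in the geometric construction.

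First a minor point: the segment $BC$ is a side of the triangle and $O$ is the centroid, so $BC$ does not pass through $O$; a small disk $D$ centred at $O$ with $A,B,C\notin D$ does not meet $BC$ at all, so ``replacing the portion of $B\ldots C$ inside $D$'' leaves $B\ldots C$ equal to the straight segment and $w'(B\ldots C,O)=\tfrac13$ unchanged.

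More seriously, even under a charitable reading in which $B\ldots C$ is detoured into $D$ and spiralled there, your key claim fails. You assert that threading $A\ldots O$ through this spiral ``is compelled to wind $m$ times around $B$''. But the threading takes place entirely inside $D$, and $B\notin D$; any closed detour confined to $D$ has winding number~$0$ around $B$. Concretely, if $A\ldots O$ agrees with the segment $AO$ outside $D$ and differs from it only inside $D$, then by (\ref{eq:add})--(\ref{eq:conn}) one gets $w'(A\ldots O,B)=w'(AO,B)+w(\text{loop in }D,\,B)=-\tfrac1{12}+0$, not $-\tfrac1{12}+m$. Placing the ``opening'' of the spiral on the side facing $B$ does not help: that choice affects how $A\ldots O$ winds around~$O$ (which is irrelevant), not around $B$. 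So the two quantities you call ``independent'' are in fact not both controlled by your construction; only $w'(B\ldots C,O)$ is.

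The paper's fix is to choose the spiralling region so that it encloses \emph{both} $B$ and $O$ but neither $A$ nor $C$: one takes a thin annulus $R$ around the segment $BO$. Both $AO$ and $BC$ cross $R$, and inside $R$ each is replaced by a spiral (with opposite senses) making $|m|$ turns around the core. Then the spiral inserted into $A\ldots O$ winds $m$ times around $B$ (and around $O$, but we do not care), while the spiral inserted into $B\ldots C$ winds $-m$ times around $O$ (and around $B$, but we do not care); since $A$ and $C$ lie outside $R$ and can be separated from it by a line, the values $w'(B\ldots C,A)$ and $w'(A\ldots O,C)$ remain $\tfrac16$ and $\tfrac1{12}$. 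The point you were missing is that the region hosting the modification must contain $B$, not just $O$.
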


(See Figure~\ref{f:1} to get an idea of the construction. See the proof of Lemma~\ref{l:construction} at the end of the section.)

\emph{Continuation of the proof of Lemma~\ref{l:str_al_em}.}
Let $\{i, j, k\} := \{1, 2, 3\}$ so that vertices $f(i)$, $f(j)$, $f(k)$ of the triangle $f(123)$ are listed in counterclockwise order.
We define $f|_{i4} = f(i) \ldots f(4)$ and $f|_{jk} = f(j) \ldots f(k)$, where $f(i) \ldots f(4)$ and $f(j) \ldots f(k)$ are polygonal lines from Lemma~\ref{l:construction} applied to a vertex $A := f(i)$ of regular triangle $ABC := f(i)f(j)f(k)$ with center $O := f(4)$, and $m := m_{j}$.

Clearly, the defined $f$ is an almost embedding. Let us check the conditions from the formulation.
$$
w_f(1) 
\overset{\text{(\ref{eq:tri})}}{=}
w'_f(23, 1) + w'_f(34, 1) + w'_f(42, 1) = \frac{1}{6} + \left( - \frac{1}{12} + m_1 \right)  - \frac{1}{12} = m_1,
$$
$$
w_f(2) 
\overset{\text{(\ref{eq:tri})}}{=}
w'_f(13, 2) + w'_f(34, 2) + w'_f(41, 2) = -\frac{1}{6} + \frac{1}{12} + \left( \frac{1}{12} - m_2 \right) = -m_2,
$$
$$
w_f(3) 
\overset{\text{(\ref{eq:tri})}}{=}
w'_f(12, 3) + w'_f(24, 3) + w'_f(41, 3) =  \frac{1}{6} + \left( -\frac{1}{12} + m_3 \right) -\frac{1}{12} = m_3,
$$
$$
w_f(4) 
\overset{\text{(\ref{eq:tri})}}{=}
w'_f(12, 4) + w'_f(23, 4) + w'_f(31, 4) = \left( \frac{1}{3} - m_2 \right) + \left( \frac{1}{3} - m_3 \right) + \left( \frac{1}{3} - m_1 \right) = 1 - m_1 - m_2 - m_3.
$$
\hfill $\square$ 

\begin{figure}[!ht]
    \includegraphics[scale=0.85]{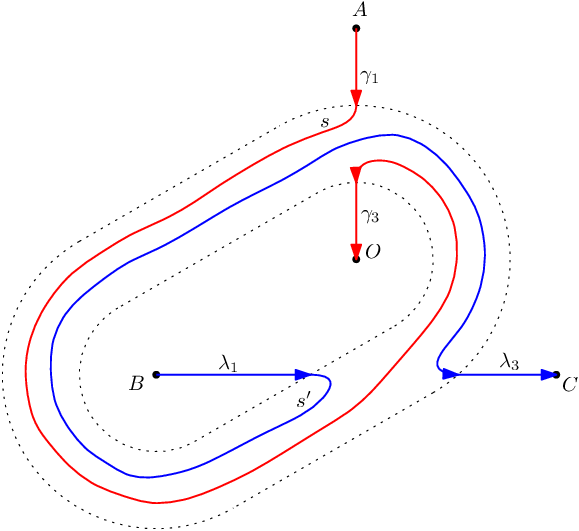}
    \quad
    \includegraphics[scale=0.85]{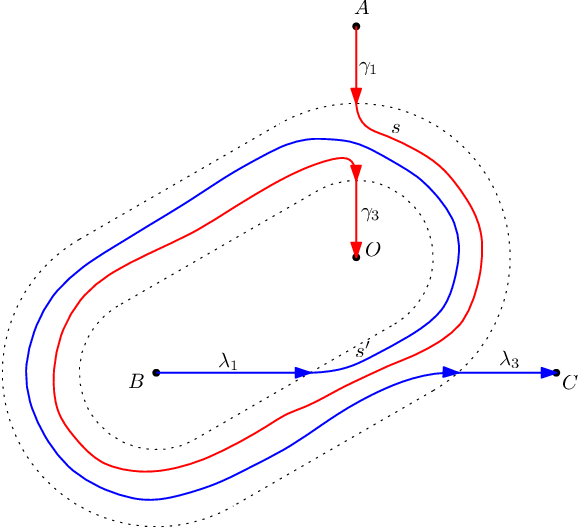}
    \caption{Polygonal lines $A \ldots O$ (red line) and $B \ldots C$ (blue line) for $m = 1$ (left) and for $m = -1$ (right).
    `Spiral' parts of both polygonal lines are depicted as continious lines. 
    Dotted lines refer to the border $\partial R$ for $\varepsilon_1 = \frac{|AO|}{3}$ and $\varepsilon_2 = \frac{2|AO|}{3}$. }
    \label{f:1}
\end{figure}

\begin{proof}[Proof of Lemma~\ref{l:construction}\arxivonly{
(see more explicit proof of Lemma~\ref{l:construction} in the Addendum)
}]
If $m = 0$, then we set $A \ldots O := AO$ and $B \ldots C := BC$. 

Assume $m \neq 0$. Take $0 < \varepsilon_1 < \varepsilon_2 < |AO|$.
Denote $R := \{ p \in \mathbb{R}^2 : \varepsilon_1 \leqslant \rho(p, BO) \leqslant \varepsilon_2 \}$ (this is a subset of $\mathbb{R}^2$ homeomorphic to the annulus $S^1 \times D^1$). 
Clearly, $\partial R$ splits the segment $AO$ into three segments. Denote them by $\gamma_1$, $\gamma_2$, $\gamma_3$ ordered and directed from $A$ to $O$ (see Figure~\ref{f:1}). Analogously, $\partial R$ splits the segment $BC$ into three segments. Denote them by $\lambda_1$, $\lambda_2$, $\lambda_3$ ordered and directed from $B$ to $C$.

Take disjoint simple polygonal lines (spirals) $s$, $s'$ inside $R$ so that

\begin{itemize}
    \item[(a)] $s$ joins the end of $\gamma_1$ to the origin of $\gamma_3$;
    \item[(b)] $s'$ joins the end of $\lambda_1$ to the origin of $\lambda_3$;
    \item[(c)] $s$ makes $|m|$ rotations counterclockwise/clockwise, if $m$ is positive/negative, around the segment~$BO$;
    \item[(d)] $s'$ makes $|m|$ rotations clockwise/counterclockwise, if $m$ is positive/negative, around the segment~$BO$.
\end{itemize}

The conditions (a), (b) imply that $s\gamma_2^{-1}$ and $s'\lambda_2^{-1}$ are closed polygonal lines. 
Since each point $A$ and $C$ can be separated from the set $R$ by a straight line, we have
\begin{equation}
    w(s\gamma_2^{-1}, C) = 0,
    \qquad
    \text{and}
    \qquad
    w(s'\lambda_2^{-1}, A) = 0.
\end{equation}

The conditions (c), (d) imply that
\begin{equation}
    w(s\gamma_2^{-1}, B) = m,
    \qquad
    \text{and}
    \qquad
    w(s'\lambda_2^{-1}, O) = -m.
\end{equation}

We set $A \ldots O := \gamma_1s\gamma_3$ and $B \ldots C := \lambda_1s'\lambda_3$. 

Let us check the conditions from the formulation. 
We have
$$w'(B \ldots C, A) 
\overset{\text{(\ref{eq:add}, \ref{eq:inv})}}{=}
w'(\lambda_1\lambda_2\lambda_3, A) + w'(s'\lambda_2^{-1}, A) 
\overset{\text{(\ref{eq:conn})}}{=}
w'(BC, A) + w(s'\lambda_2^{-1}, A) \overset{(1)}{=} \frac{\angle BAC}{2\pi} = \frac{1}{6},$$

$$w'(A \ldots O, B) 
\overset{\text{(\ref{eq:add}, \ref{eq:inv})}}{=}
w'(\gamma_1\gamma_2\gamma_3, B) + w'(s\gamma_2^{-1}, B) 
\overset{\text{(\ref{eq:conn})}}{=}
w'(AO, B) + w(s\gamma_2^{-1}, B) 
\overset{(2)}{=} 
\frac{\angle ABO}{2\pi} + m =  -\frac{1}{12} + m,$$

$$w'(A \ldots O, C) 
\overset{\text{(\ref{eq:add}, \ref{eq:inv})}}{=}
w'(\gamma_1\gamma_2\gamma_3, C) + w'(s\gamma_2^{-1}, C) 
\overset{\text{(\ref{eq:conn})}}{=}
w'(AO, C) + w(s\gamma_2^{-1}, C) 
\overset{(1)}{=} 
\frac{\angle ACO}{2\pi} = \frac{1}{12},$$

$$w'(B \ldots C, O) 
\overset{\text{(\ref{eq:add}, \ref{eq:inv})}}{=}
w'(\lambda_1\lambda_2\lambda_3, O) + w'(s'\lambda_2^{-1}, O) 
\overset{\text{(\ref{eq:conn})}}{=}
w'(BC, O) + w(s'\lambda_2^{-1}, O) 
\overset{(2)}{=}  
\frac{\angle BOC}{2\pi} - m = \frac{1}{3} - m.$$
\end{proof}


\section{Proof of Lemma~\ref{l:n_finger_moves}}
\label{sec:finger_move}
\textbf{Comment.} \textit{This proof is a formalization of the idea of `finger move' of a polygonal line around a segment (see Figure~\ref{f:finger_move} for illustrating this idea).
Roughly speaking, to obtain the required map we make $|n|$ finger moves (positive or negative) of the polygonal line $g|_{13}$ around the simple polygonal line $g|_{24}$.}

\begin{figure}[ht]
    \includegraphics[scale=0.8]{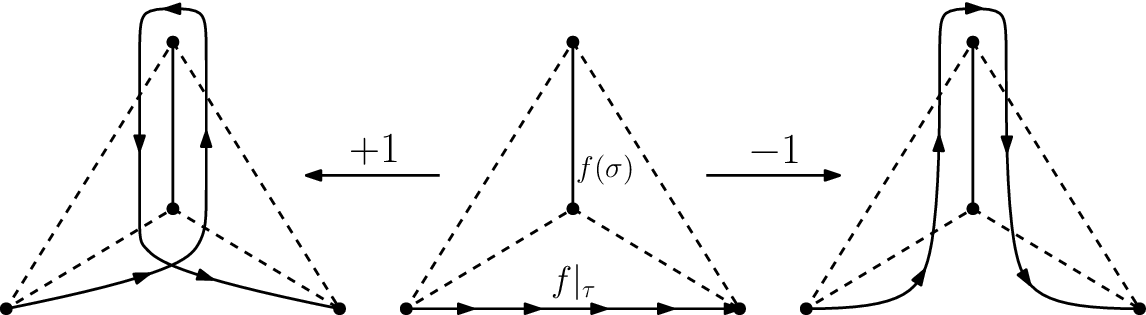}
    \caption{`Finger moves' of a polygonal line $f|_\tau$ around a segment $f(\sigma)$: positive (left) and negative (right)}\label{f:finger_move}
    \label{f:2}
\end{figure}


Since $g$ is a simple almost embedding, there exists a simple closed polygonal line $L$ (see Figure~\ref{f:3}), oriented counterclockwise, such that

$\bullet$ $L$ is disjoint from the polygonal lines $g|_{13}$ and $g|_{24}$, and

$\bullet$ the polygonal line $g|_{24}$ lies in the bounded connected component of $\mathbb{R}^2 \setminus L$, while the polygonal line $g|_{13}$ lies in the unbounded connected component of $\mathbb{R}^2 \setminus L$.
By construction of $L$, 
for any integer $n$ we have
\begin{equation}
    w(L^n, g(2)) 
    \overset{\text{(\ref{eq:conn})}}{=} 
    w'(\underbrace{L^{\sgn n} \ldots L^{\sgn n}}_{|n|\ \text{times}}, g(2)) 
    \overset{\text{(\ref{eq:add})}}{=} 
    |n| \cdot w'(L^{\sgn n}, g(2)) 
    \overset{\text{(\ref{eq:inv})}}{=}
    |n| \cdot \sgn n = n;
\end{equation}
and, analogously,
\begin{equation}
    w(L^n, g(4)) = n.
\end{equation}


\begin{figure}[!ht]
    \includegraphics[scale=0.85]{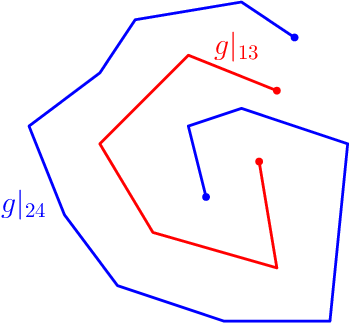}
    \quad
    \includegraphics[scale=0.85]{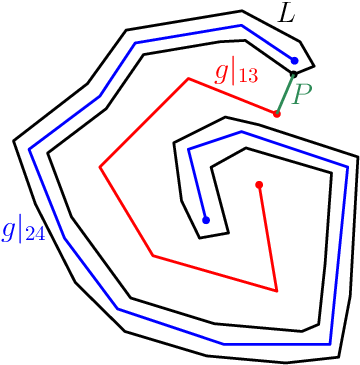}
    \quad
    \includegraphics[scale=0.85]{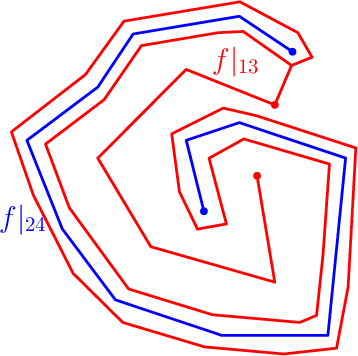}
    \caption{Left: images of edges $13$ and $24$ under the map $g$.
    Middle: $L$ is a simple closed (black) line disjoint from $g|_{13}$ and $g|_{24}$; $P$ is a (green) line disjoint from $g|_{24}$ and joining the point $g(3)$ to a vertex of $L$.
    Right: images of edges $13$ and $24$ under the map $f$.
    }
    \label{f:3}
\end{figure}


Let us define a map $f:K_4 \to \mathbb{R}^2$ as follows: for every directed edge $\sigma \in \{ 12, 14, 23, 24, 34 \}$ in $K_4$ the map $f$ coincides with the map $g$ but $f|_{13} = g|_{13}PL^nP^{-1}$,  where $P$ is a polygonal line disjoint from $g|_{24}$ and joining $g(3)$ to a point of $L$.



The map $f$ is an almost embedding because the map $g$ is an almost embedding and polygonal lines $f|_{13} = g|_{13}PL^nP^{-1}$ and $f|_{24} = g|_{24}$ are disjoint.

Let us check the conditions from the formulation.

Since cycles $C_1$ and $C_3$ do not contain edge $13$, we have $f|_{C_1} = g|_{C_1}$ and $f|_{C_3} = g|_{C_3}$. Hence $w_f(i) = w_g(i)$ for $i \in \{1, 3 \}$. 

We have
$$
w_f(2) 
\overset{\text{(\ref{eq:tri})}}{=}
w'_f(13, 2) + w'_f(34, 2) + w'_f(41, 2) 
\overset{\text{(\ref{eq:add})}}{=} 
$$
$$
\big( w'_g(13, 2) + w'(L^n, g(2)) \big) + w'_g(34, 2) + w'_g(41, 2) 
\overset{\text{(\ref{eq:tri})}}{=}
w_g(2) + w(L^n, g(2)) 
\overset{(3)}{=}
w_g(2) + n,\ \text{and}
$$
$$
w_f(4) 
\overset{\text{(\ref{eq:tri})}}{=}
w'_f(12, 4) + w'_f(23, 4) + w'_f(31, 4) 
\overset{\text{(\ref{eq:add})}}{=}
$$
$$
w'_g(12, 4) + w'_g(23, 4) + \big( w'_g(31, 4) + w(L^{-n}, g(4)) \big) 
\overset{\text{(\ref{eq:tri})}}{=}
w_g(4) + w(L^{-n}, g(4)) 
\overset{(4)}{=}  w_g(4) - n.
$$
$\hfill${$\square$}

\section{Addendum: more explicit proof of Lemma~\ref{l:construction}}

    If $m = 0$, then we set $A \ldots O := AO$ and $B \ldots C := BC$.
    Below we define $A \ldots O$ ($\Gamma$ for short) and $B \ldots C$ ($\Lambda$ for short) for positive $m$. 
    For the negative $m$ swap letters `$Q$' and `$S$' in the construction below.

    Let $T$ be the midpoint of $AC$. 
    Let $S$, $X$, $Q$ be images of points $A$, $
    T$, $C$ under the homothety with center $B$ and ratio $\frac{1}{3}$. Let $R$ be a point such that $X$ is the midpoint of $TR$. Then $QRST$ is the boundary of a rhombus $\Omega$ with center $X$. 
    
    Let $S_1, S_2, \ldots, S_{2m-1}$ be the points dividing $XS$ into $2m$ equal parts numbered from $X$ to $S$. Let $T_1, T_2, \ldots, T_{2m-1}$ be the points dividing $OT$ into $2m$ equal parts numbered from $O$ to $T$. By $'$ we denote a point reflection w.r.t. $X$. For every $i \in \{ 1, \ldots, 2m-1\}$ denote $Q_i := S'_i$ and $R_i := T'_i$.

    Let $Z := OQ_1R_1S_2T_2 \ldots Q_{2m-1}R_{2m-1}ST$ be a polygonal line joining $O$ to $T$ inside the rhombus $\Omega$. Then $Z' = O'Q'_1R'_1S'_2T'_2 \ldots Q'_{2m-1}R'_{2m-1}S'T' = BS_1T_1Q_2R_2 \ldots S_{2m-1}T_{2m-1}QR$ is a polygonal line joining $B$ to $R$ inside the rhombus $\Omega$. We have $Z \cap Z' = \varnothing$, because for any pair $(X, Y) \in \{ (Q, R),\ (R, S),\ (S, T),\ (T, Q) \}$ the intersection $X_iY_j \cap X_kY_l$ is empty if and only if $[i < k] = [j < l]$, where $[\ \cdot\ ]$ is the Iverson bracket. Note that $w'(Z, B) = |m| \cdot ( \angle TXQ + \angle QXR + \angle RXS + \angle SXT) = -m$. 

    Finally, we set $\Gamma := A Z^{-1}$ and $\Lambda := Z'C$ (see Figure~\ref{f:4}). Since $Z \subset \Omega \setminus \{ R \}$, then $Z \cap RC = \varnothing$ and $Z' \cap AT = \varnothing$. Hence $\Gamma \cap \Lambda = (Z \cap Z') \cup (Z \cap RC) \cup (AT \cap Z') \cup (AT \cap RC) = \varnothing$.

    Let us check the conditions from the formulation of the lemma: 
    $$
    w'(\Lambda, A) = \angle BAC = \frac{1}{6};
    $$
    $$
    w'(\Gamma, B) = w'(AT, B) + w'(Z^{-1}, B) = -\frac{1}{12} - w'(Z, B) = -\frac{1}{12} + m;
    $$
    $$
    w'(\Gamma, C) = \angle ACO = \frac{1}{12};
    $$
    $$
    w'(\Lambda, O) = w'(Z', O) + w'(RC, O) = w'(Z, B) + \frac{1}{3} = -m + \frac{1}{3}.
    $$


    \begin{figure}[!htb]
    \includegraphics[scale=1.3]{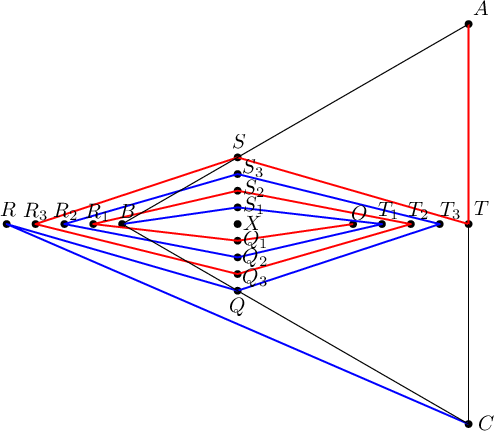}
    \caption{
    Polygonal lines $A \ldots O$ (red line) and $B \ldots C$ (blue line) for $m = 2$
    } 
    \label{f:4}
    \end{figure}












\printbibliography

\end{document}